\theoremstyle{plain}
\newtheorem{theorem}{Theorem}[section]
\newtheorem{lemma}[theorem]{Lemma}
\theoremstyle{definition}
\newtheorem{definition}[theorem]{Definition} \theoremstyle{remark}
\begin{document}
\newcommand{\RR}{{\mathbb R}}
 \newcommand{\N}{{\mathbb N}}
\newcommand{\Rn}{{\RR^n}}
\newcommand{\ieq}{\begin{equation}}
\newcommand{\eeq}{\end{equation}}
\newcommand{\ieqa}{\begin{eqnarray}}
\newcommand{\eeqa}{\end{eqnarray}}
\newcommand{\ieqas}{\begin{eqnarray*}}
\newcommand{\eeqas}{\end{eqnarray*}}
\newcommand{\Bo}{\put(260,0){\rule{2mm}{2mm}}\\}

\numberwithin{equation}{section}

\def\at#1{{\bf #1}: }
\def\att#1#2{{\bf #1}, {\bf #2}: }
\def\attt#1#2#3{{\bf #1}, {\bf #2}, {\bf #3}: }
\def\atttt#1#2#3#4{{\bf #1}, {\bf #2}, {\bf #3},{\bf #4}: }
\def\aug#1#2{\frac{\displaystyle #1}{\displaystyle #2}}
\def\figura#1#2{
\begin{figure}[ht]
\vspace{#1}
\caption{#2}
\end{figure}}
\def\B#1{\bibitem{#1}}
\def\q{\int_{\Omega^\sharp}}
\def\z{\int_{B_{\bar{\rho}}}\underline{\nu}\nabla (w+K_{c})\cdot
\nabla h}
\def\a{\int_{B_{\bar{\rho}}}}
\def\b{\cdot\aug{x}{\|x\|}}
\def\n{\underline{\nu}}
\def\d{\int_{B_{r}}}
\def\e{\int_{B_{\rho_{j}}}}
\def\LL{{\mathcal L}}
\def\D{{\mathcal D}}
\def\itr{\mathrm{Int}\,}
\def\tg{\tilde{g}}
\def\A{{\mathcal A}}
\def\S{{\mathcal S}}
\def\H{{\mathcal H}}
\def\M{{\mathcal M}}
\def\MM{{\mathcal M}}
\def\T{{\mathcal T}}
\def\N{{\mathcal N}}
\def\I{{\mathcal I}}
\def\F{{\mathcal F}}
\def\J{{\mathcal J}}
\def\E{{\mathcal E}}
\def\P{{\mathcal P}}
\def\HH{{\mathcal H}}
\def\V{{\mathcal V}}
\def\B{{\mathcal B}}
\def\BB{{\mathbb B}}

\def\vint_#1{\mathchoice%
          {\mathop{\kern 0.2em\vrule width 0.6em height 0.69678ex depth -0.58065ex
                  \kern -0.8em \intop}\nolimits_{\kern -0.4em#1}}%
          {\mathop{\kern 0.1em\vrule width 0.5em height 0.69678ex depth -0.60387ex
                  \kern -0.6em \intop}\nolimits_{#1}}%
          {\mathop{\kern 0.1em\vrule width 0.5em height 0.69678ex
              depth -0.60387ex
                  \kern -0.6em \intop}\nolimits_{#1}}%
          {\mathop{\kern 0.1em\vrule width 0.5em height 0.69678ex depth -0.60387ex
                  \kern -0.6em \intop}\nolimits_{#1}}}
\def\vintslides_#1{\mathchoice%
          {\mathop{\kern 0.1em\vrule width 0.5em height 0.697ex depth -0.581ex
                  \kern -0.6em \intop}\nolimits_{\kern -0.4em#1}}%
          {\mathop{\kern 0.1em\vrule width 0.3em height 0.697ex depth -0.604ex
                  \kern -0.4em \intop}\nolimits_{#1}}%
          {\mathop{\kern 0.1em\vrule width 0.3em height 0.697ex depth -0.604ex
                  \kern -0.4em \intop}\nolimits_{#1}}%
          {\mathop{\kern 0.1em\vrule width 0.3em height 0.697ex depth -0.604ex
                  \kern -0.4em \intop}\nolimits_{#1}}}

\newcommand{\kint}{\vint}
\newcommand{\intav}{\vint}
\newcommand{\kintint}[2]{\mathchoice%
          {\mathop{\kern 0.2em\vrule width 0.6em height 0.69678ex depth -0.58065ex
                  \kern -0.8em \intop}\nolimits_{\kern -0.45em#1}^{#2}}%
          {\mathop{\kern 0.1em\vrule width 0.5em height 0.69678ex depth -0.60387ex
                  \kern -0.6em \intop}\nolimits_{#1}^{#2}}%
          {\mathop{\kern 0.1em\vrule width 0.5em height 0.69678ex depth -0.60387ex
                  \kern -0.6em \intop}\nolimits_{#1}^{#2}}%
          {\mathop{\kern 0.1em\vrule width 0.5em height 0.69678ex depth -0.60387ex
                  \kern -0.6em \intop}\nolimits_{#1}^{#2}}}

\renewcommand{\div}{\operatorname{div}}
\newcommand{\trace}{\operatorname{Trace}}

\title[Characterization of $p$-Harmonic Functions]{On the Characterization of $p$-Harmonic Functions  on the Heisenberg Group by Mean Value Properties.}

\author[F. Ferrari]
{Fausto Ferrari}
\address{Dipartimento di Matematica dell'Universit\`a di Bologna, Piazza di Porta S. Donato, 5, 40126, Bologna, Italy}
\email{fausto.ferrari@unibo.it}
\author[Q. Liu]
{Qing Liu}
\address{Department of Mathematics,
University of Pittsburgh,
Pittsburgh, PA 15260, USA}
\email{qingliu@pitt.edu}
\author[J. Manfredi]
{Juan J. Manfredi}
\address{Department of Mathematics,
University of Pittsburgh,
Pittsburgh, PA 15260, USA}
\email{manfredi@pitt.edu}

\date{\today}

\thanks{F.~F.\ is supported by  MURST, Italy, by University of Bologna,Italy
by EC project CG-DICE and by the ERC starting grant project 2011 EPSILON (Elliptic PDEs and Symmetry of Interfaces and Layers for Odd Nonlinearities). F.~F.\ wishes to thank the Department of Mathematics at the University of Pittsburgh for the kind hospitality.\\ \indent
Q.~L, and J.~M. supported by NSF award DMS-1001179}

\keywords{p-Laplacian, Heisenberg group, Mean value formulas, Viscosity solutions}

\subjclass{35J60, 35J70}

\begin{abstract}
We  characterize  $p-$harmonic functions in the Heisenberg group in terms of an asymptotic mean value property, where $1<p<\infty$,   following the scheme
described  in \cite{MPR} for  the Euclidean case. The new tool that allows us to consider the subelliptic case is a geometric lemma, Lemma \ref{lemma1} below,  that relates the directions of the points of maxima and minima of a function on a small subelliptic ball with the unit horizontal gradient of that function. 
\end{abstract}

\maketitle
\section{Introduction}
In this paper we study $p- $harmonic functions in the Heisenberg group  in terms of an asymptotic mean value property.  The  corresponding characterization of the $p$-harmonic functions in terms of an asymptotic mean value property in the Euclidean sense was obtained in \cite{MPR}. More precisely  in \cite{MPR},  the authors show
 that if $u$ is a continuous function in a domain $\Omega\subset \mathbb{R}^n$ and $p\in (1 , \infty]$,  then the asymptotic expansion
$$
u(x)=\frac{p-2}{2(p+n)}\left\{\max_{\overline{B_\epsilon(x)}} u+\min_{\overline{B_\epsilon(x)}} u\right\}+\frac{2+n}{p+n}\kint_{B_\epsilon(x)}u(y)\,dy+o(\epsilon^2),
$$   
holds as $\epsilon\to 0$  for all $x\in \Omega$ in the viscosity sense if and only if $u$ is a viscosity solution of the $p$-Laplace equation
$$
\div\left(|\nabla u(x)|^{p-2}\nabla u(x)\right)=0.
$$
 Here $B_\epsilon(x)$ is the Euclidean ball centered in $x$ with radius $\epsilon$.

We want to extend this  characterization to functions defined on the Heisenberg group $\mathbb{H}^n$. In Section \ref{preliminary} we present an overview of the Heisenberg group, where the geometry and analysis are  different than in  Euclidean space. For this introduction  we anticipate a few definitions  from 
Section \ref{preliminary} and refer the reader to this section for full details.  For  $p\in (1,\infty)$ the (subelliptic) $p$-Laplace  operator in the Heisenberg group is 
$$
\Delta_{p, \mathbb{H}^n}u=\div_{\mathbb{H}^n}\left(| \nabla_{\mathbb{H}^n}u|^{p-2} \nabla_{\mathbb{H}^n}u\right).
$$

 Here $\nabla_{\mathbb{H}^n}$ and  $\div_{\mathbb{H}^n}$  are respectively the intrinsic gradient and  the intrinsic divergence in the Heisenberg group.  From now on we will  denote by  ${B}(P,\epsilon)$  the intrinsic ball of radius $\epsilon$  with respect to the gauge distance, centered at the point $P\in \mathbb{H}^n$.

 We point out that for $p=2,$ $\Delta_{2, \mathbb{H}^n}u=\Delta_{\mathbb{H}^n}u$ is the real part of the Kohn-Laplace operator, a linear degenerate  second order elliptic operator whose lowest eigenvalue  is always zero. In particular,  to give an idea of the structure of this linear operator corresponding to the case $p=2$ and  $n=1$  we set
$$
\Delta_{\mathbb{H}}u(P)=\mbox{div}_{\mathbb{H}}(M(P)\nabla u(P))=\trace(M(P)D^2u(P)),
$$
where the $3\times 3$ matrix $M(P)$ for $P=(x, y ,t)\in \mathbb{H}$ is given by 
$$
M(P)=\left[\begin{array}{lcc}1,&0,&2 y\\
0,&1,&-2x \\
2y ,&-2x,&4(x^2+y^2)\end{array}\right].
$$
Observe that we always have
$$
\min\bigg\{\lambda\colon\lambda\mbox{ is an eigenvalue of } M(P)\bigg\}=0.
$$
 
For $p=1$ we get a subelliptic version of the mean curvature operator
$$
\Delta_{1, \mathbb{H}^n}u(P)=\div_{\mathbb{H}^n}\left(\frac{ \nabla_{\mathbb{H}^n}u(P)}{|\nabla_{\mathbb{H}^n}u(P)|}\right).
$$

See  the monograph  \cite{CDPT}  for the intrinsic mean curvature operator  in the Heisenberg setting.  See also  \cite{CC} and  \cite{FLM} for some recent results on the  the flow by mean curvature in the subelliptic setting.

Our main result is the following:

\begin{theorem}\label{maintheorem}
Let $1<p<\infty$ and let $u$ be a continuous function defined  in a domain $\Omega\subset\mathbb{H}^n.$ The asymptotic expansion
\begin{equation}\label{asymptotic}
u(P)=\frac{\alpha}{2}\left(\min_{\overline{B(P,\epsilon)}}u +\max_{B(P,\epsilon)}u\right)+  \beta \kint_{B(P,\epsilon)}u(x,y,t)+o(\epsilon^2),
\end{equation}
holds  as $\epsilon\to 0$ for every $P\in \Omega$ in the viscosity sense if and only if
$$
\Delta_{p,  \mathbb{H}^n}u=0
$$
in $\Omega$ in the viscosity sense,
where
 $$\alpha=\frac{2(p-2)C(n)}{2(p-2)C(n)+1},\,  \beta=\frac{1}{2(p-2)C(n)+1}, \textrm{ and }
C(n)=\frac{1}{2(n+1)}\frac{\int_{0}^{1}(1-s^2)^{\frac{n+1}{2}}\,dt}{\int_{0}^{1}(1-s^2)^{\frac{n}{2}}\,dt}.
$$ 
\end{theorem}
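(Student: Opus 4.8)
The plan is to reduce the equivalence to a pointwise second-order expansion for smooth test functions and then transfer it to the viscosity level, using Lemma~\ref{lemma1} to control where the extrema on small gauge balls are located. First I would put the operator in non-divergence form: for $\phi\in C^2$ with $\nabla_{\mathbb{H}^n}\phi(P)\neq 0$,
$$\Delta_{p,\mathbb{H}^n}\phi=|\nabla_{\mathbb{H}^n}\phi|^{p-2}\Big(\Delta_{\mathbb{H}^n}\phi+(p-2)\,\Delta_{\infty,\mathbb{H}^n}\phi\Big),$$
where $\Delta_{\mathbb{H}^n}=\Delta_{2,\mathbb{H}^n}$ is the sublaplacian of the Introduction and $\Delta_{\infty,\mathbb{H}^n}\phi=\langle (D^2_{\mathbb{H}^n}\phi)^{\ast}\,\tfrac{\nabla_{\mathbb{H}^n}\phi}{|\nabla_{\mathbb{H}^n}\phi|},\tfrac{\nabla_{\mathbb{H}^n}\phi}{|\nabla_{\mathbb{H}^n}\phi|}\rangle$ is the normalized horizontal infinity-Laplacian built from the symmetrized horizontal Hessian $(D^2_{\mathbb{H}^n}\phi)^{\ast}$. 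Since the factor $|\nabla_{\mathbb{H}^n}\phi|^{p-2}$ is positive off the characteristic set, there the equation $\Delta_{p,\mathbb{H}^n}\phi=0$ is equivalent to $\Delta_{\mathbb{H}^n}\phi+(p-2)\Delta_{\infty,\mathbb{H}^n}\phi=0$; these are precisely the two quantities the right-hand side of (\ref{asymptotic}) must reproduce.

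The core computation is to expand the three terms in (\ref{asymptotic}) to order $\epsilon^2$ on the intrinsic ball $B(P,\epsilon)$, which I would carry out in exponential coordinates after a left translation to $P$. The averaging term is routine: since the gauge is even in the vertical variable and invariant under rotations of the horizontal layer, every odd monomial and every off-diagonal quadratic term integrates to zero---in particular the vertical first-order term, which is of size $\epsilon^2$ but odd in $t$---so only the horizontal trace survives and
$$\kint_{B(P,\epsilon)}\phi=\phi(P)+A_{1}\,\epsilon^2\,\Delta_{\mathbb{H}^n}\phi(P)+o(\epsilon^2),$$
with $A_{1}$ an explicit integral over the unit gauge ball. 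The delicate term is the pair of extrema, and this is where Lemma~\ref{lemma1} is decisive: it guarantees that the maximum and minimum of $\phi$ over $\overline{B(P,\epsilon)}$ are attained, to leading order, along the directions $\pm\nabla_{\mathbb{H}^n}\phi/|\nabla_{\mathbb{H}^n}\phi|$. Feeding these into the Taylor expansion, the first-order contributions $\pm\epsilon|\nabla_{\mathbb{H}^n}\phi|$ cancel in the sum while the surviving second-order part is governed exactly by the Hessian in the gradient direction, giving
$$\max_{\overline{B(P,\epsilon)}}\phi+\min_{\overline{B(P,\epsilon)}}\phi=2\phi(P)+A_{2}\,\epsilon^2\,\Delta_{\infty,\mathbb{H}^n}\phi(P)+o(\epsilon^2).$$

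Inserting both expansions into (\ref{asymptotic}) and using $\alpha+\beta=1$, the $O(1)$ terms cancel and the $O(\epsilon^2)$ terms collapse to a single scalar identity of the form $\tfrac{\alpha}{2}A_{2}\,\Delta_{\infty,\mathbb{H}^n}\phi+\beta A_{1}\,\Delta_{\mathbb{H}^n}\phi=0$. Requiring this to be equivalent to $\Delta_{\mathbb{H}^n}\phi+(p-2)\Delta_{\infty,\mathbb{H}^n}\phi=0$ pins down $\alpha$ and $\beta$ and determines $C(n)$ through the ratio of the two gauge-ball integrals; evaluating the averaging integral by slicing the region $\{|z|^4+t^2\le 1\}$ along the vertical variable reproduces the quotient of $\int_0^1(1-s^2)^{(n+1)/2}\,ds$ and $\int_0^1(1-s^2)^{n/2}\,ds$ appearing in the statement. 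Finally I would transfer the pointwise statement to the viscosity setting in the standard way: for a test function touching $u$ from below (resp.\ above) at $P$ with non-vanishing horizontal gradient, the supersolution (resp.\ subsolution) inequality $\Delta_{\mathbb{H}^n}\phi+(p-2)\Delta_{\infty,\mathbb{H}^n}\phi\le 0$ (resp.\ $\ge 0$), combined with the two expansions, yields the corresponding one-sided form of (\ref{asymptotic}), and conversely. The main obstacle, beyond Lemma~\ref{lemma1} itself, is the characteristic set $\{\nabla_{\mathbb{H}^n}\phi=0\}$: there $\Delta_{\infty,\mathbb{H}^n}$ is undefined, the direction of the extrema is no longer pinned down, and the argument must fall back on the semicontinuous-envelope definition of viscosity solution, replacing the ill-defined term by the appropriate $\sup$/$\inf$ over admissible directions, exactly as in the Euclidean scheme of \cite{MPR}.
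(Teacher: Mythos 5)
Your proposal follows essentially the same route as the paper: the non-divergence decomposition $\Delta_{\mathbb{H}^n}\phi+(p-2)\Delta_{\infty,\mathbb{H}^n}\phi$, the gauge-ball averaging lemma producing the $C(n)\,\epsilon^2\,\Delta_{\mathbb{H}^n}\phi$ term, the use of Lemma \ref{lemma1} together with the symmetric competitor $-P_{\epsilon,M}$ (resp.\ $-P_{\epsilon,m}$) to extract the $\Delta_{\infty,\mathbb{H}^n}$ contribution from $\max+\min$, and the same determination of $\alpha,\beta$ from $\alpha/(2C(n)\beta)=p-2$ with $\alpha+\beta=1$. The only cosmetic differences are that the paper records the $\max+\min$ expansion as a pair of one-sided inequalities (the form actually fed into the viscosity definitions) rather than the asymptotic equality you state, and that it sidesteps the characteristic set by restricting to test functions with nonvanishing horizontal gradient, citing \cite{Bi} and \cite{JLM}, rather than via semicontinuous envelopes.
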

We remark that $\alpha+\beta=1$ and that 
$$C(n)= \frac{\Gamma \left(\frac{n+3}{2}\right)^2}{(2
   n+1) \Gamma \left(\frac{n}{2}+1\right)
   \Gamma \left(\frac{n}{2}+2\right)}.$$

The key tool in the proof of Theorem \ref{maintheorem} is Lemma \ref{lemma1} in Section \ref{Mean_formulas}. Roughly speaking,  in Lemma \ref{lemma1} we prove that  if $P_{0}=(x_{0},y_{0},t_{0})\in\mathbb{H}^n$ is not characteristic for the level set $\{u=u(P_{0})\},$ then the extrema  of the function $u$ in the intrinsic ball of radius $\epsilon$ and center $P_{0}$ are attained at points $(x_\epsilon, y_\epsilon, t_\epsilon)$ that satisfy
$$
\lim_{\epsilon\to 0}\left(
\frac{x_\epsilon-x_{0}}{\epsilon},\frac{y_\epsilon-y_{0}}{\epsilon},\frac{|t_\epsilon-t_{0}|}{\epsilon^3}
\right)=\left(\pm \left(\frac{ \nabla_{\mathbb{H}^n}u(P_{0})}{|\nabla_{\mathbb{H}^n}u(P_{0})|}\right),\frac{1}{2}|\mathfrak{p}(P_{0})|\right),
$$ where 
 $ \mathfrak{p}$ is the imaginary curvature
of the level surface $\{u=u(P_{0})\}$ at $P_{0}$  introduced in \cite{AF1} and \cite{AF2} and given by
$$
\mathfrak{p}(P_{0})=- \frac{ [X,Y](P_{0})}{|\nabla_{\mathbb{H}^n}u(P_{0})|}.
$$

We recall now the definition of viscosity solution in the Heisenberg group taken from \cite{Bi}. 

Observe that  if $u$ is smooth then
$$
-\Delta_{p, \mathbb{H}^n}u=-|\nabla_{\mathbb{H}^n}u|^{p-2}\left((p-2)\Delta_{\infty, \mathbb{H}^n}u+\Delta_{\mathbb{H}^n}u\right), 
$$
where we have used 
\begin{equation}\label{infinitylaplacian}
\begin{split}
\Delta_{\infty,\mathbb{H}^n}u= \langle D_{\mathbb{H}^n}^{2,*}u \,\,\frac{\nabla_{\mathbb{H}^n}u}{|\nabla_{\mathbb{H}^n}u|},\frac{\nabla_{\mathbb{H}^n}u}{|\nabla_{\mathbb{H}^n}u|}\rangle.
\end{split}
\end{equation}

\begin{definition}
Fix a value of $p\in (1,\infty)$ and consider the $p$-Laplace equation 
\begin{equation}\label{plaplace}
-\div_{\mathbb{H}^n}( |\nabla_{\mathbb{H}^n}u|^{p-2} \nabla_{\mathbb{H}^n}u)=0.
\end{equation}
\begin{itemize}
\item[(i)] A lower semi-continuous function $u$ is a viscosity supersolution of  (\ref{plaplace})  if for every $\phi\in C^2(\Omega)$ such that $u-\phi$ has a strict minimum at $P_{0}\in\Omega,$ and $\nabla_{\mathbb{H}^n}\phi(P_{0})\not=0$ we have
$$
-(p-2)\Delta_{\infty, \mathbb{H}^n}\phi(P_{0})-\Delta_{\mathbb{H}^n}\phi(P_{0})\geq 0.
$$
\item[(ii)] A  lower semi-continuous function $u$ is a viscosity subsolution of (\ref{plaplace})  if for every $\phi\in C^2(\Omega)$ such that $u-\phi$ has a strict maximum in $P_{0}\in\Omega,$ and $\nabla_{\mathbb{H}^n}\phi(P_{0})\not=0,$ we have
$$
-(p-2)\Delta_{\infty, \mathbb{H}^n}\phi(P_{0})-\Delta_{\mathbb{H}^n}\phi(P_{0})\leq 0.
$$
\item[(iii)] A continuous function $u$ is a viscosity solution of of  (\ref{plaplace})  if it is both a viscosity supersolution and a viscosity subsolution. 
\end{itemize}
\end{definition}

As shown in \cite{JLM} for the Euclidean case and in \cite{Bi} for the subelliptic  case, it suffices to consider smooth functions whose horizontal gradient does not vanish. 
In addition in those papers it is shown that the notions of viscosity and weak solutions agree for homogeneous equation $-\Delta_{p, \mathbb{H}^n}u=0$.

Next we state carefully what we mean when we say that the asymptotic expansion  (\ref{asymptotic}) holds in the    viscosity sense.  Recall the familiar definition of \lq\lq little  o\rq\rq for a real valued function $h$ defined in a neighborhood of the origin. We write 
$$h(x)=o(x^2)\text{ as } x\to0^+$$ for 
$$\lim_{x\to 0^+} \frac{h(x)}{x^2}=0.$$
\begin{definition} Let $h$ be a real valued function defined in a neighborhood of zero.
We say that
$$h(x)\le o(x^2)\text{ as } x\to0^+$$ if any of the three equivalent conditions is satisfied:
\begin{itemize}
\item[a)] $\displaystyle\limsup_{x\to 0^+} \frac{h(x)}{x^2}\le 0$, 
\item[b)] there exists a nonnegative function $g(x)\ge 0$ such that
$$ h(x)+g(x)=o(x^2) \text{ as } x\to0^+,$$ or
\item[c)] $\displaystyle\lim_{x\to 0^+} \frac{h^+(x)}{x^2}\le 0,$
\end{itemize}
\end{definition}
A similar definition is given for $$h(x) \ge o(x^2)\text{ as }x\to0^+.$$
by reversing the inequalities in a) and c), requiring that $g(x) \le 0$ in b)
and replacing $h^+$ by $h^-$ in c). 
\begin{definition}\label{inequal_viscosity_definition}
A continuous function defined in a neighborhood of a point $P\in \mathbb{H}^{n}$ satisfies
\begin{equation}\label{asymptotic2}
u(P)=\frac{\alpha}{2}\left(\min_{\overline{\BB(P,\epsilon)}}u +\max_{\overline{\BB(P,\epsilon)}}u\right)+  \beta \kint_{\mathbb{\BB}(P,\epsilon)}+o(\epsilon^2),
\end{equation}
as $\epsilon\to 0$
in viscosity sense, if 
\begin{itemize}
\item[(i)] for every continuous function $\phi$ defined in a neighborhood of a point $P$ such that $u-\phi$ has a strict minimum at  $P$  with $u(P)=\phi(P)$ we have
$$
-\phi(P)+\frac{\alpha}{2}\left(\min_{\overline{\BB(P,\epsilon)}}\phi +\max_{\overline{\BB(P,\epsilon)}}\phi\right) +\beta\kint_{\BB(P,\epsilon)}\phi\le o(\epsilon^2),
$$
as $\epsilon \to 0$, and
\item[(ii)] for every continuous function $\phi$ defined in a neighborhood of a point $P$ such that $u-\phi$ has a strict maximum at   $P$  with $u(P)=\phi(P)$ then
$$
\phi(P)-\frac{\alpha}{2}\left(\min_{\overline{\BB(P,\epsilon)}}\phi +\max_{\overline{\BB(P,\epsilon)}}\phi\right)+\beta\kint_{\BB(P,\epsilon)}\phi\ge o(\epsilon^2).
$$
as $\epsilon \to 0$.
\end{itemize}
\end{definition}

For the case $p=\infty$ we could  consider the $1$-homogeneous infinity-Laplacian
\ref{infinitylaplacian}, 
but there remain  some technical difficulties. We certainly conjecture that
the statement of Theorem \ref{maintheorem} holds in this case.

\section{   Heisenberg group Preliminaries}\label{preliminary}
 For $n\ge 1$ we denote by
$\mathbb{H}^n$ the set $\mathbb{R}^{2n+1}$
 endowed with the non-commutative product law given by

$$
(x_1,y_1,t_1)\star(x_2,y_2,t_2)=(x_1+x_2,y_1+y_2,t_1+t_2+2(x_2\cdot y_1-x_1\cdot y_2)),
$$
where we have denote points in $\mathbb{R}^{2n+1}$ as 
$P=(x,y,t)$ with $x,y\in \mathbb{R}^n$ and $t\in\mathbb{R}$ 
and $x\cdot y$ denote the usual inner product in $\mathbb{R}^n$.
The pair  $\mathbb{H}^n\equiv(\mathbb{R}^{2n+1},\star)$ is  the Heisenberg group of order $n$. From now we will denote the group operation $\cdot$ instead of $\star$ when there is no risk of confusion.
\par
Given $P=(x,y,t)\in\mathbb{H}^n$ we write $X_i=(e_i,0,2y_i)$ and $Y_i=(0,e_i,-2x_i)$ for $i=1,\dots,n$,  where $\{e_i\}_{1\leq i\leq n}$ is the canonical basis in $\mathbb{R}^n$.  We identify these vectors with the vector fields 
$$
X_i=\partial_{x_i}+2y_i\partial_t
$$
$$
Y_i=\partial_{y_i}-2x_i\partial_t.
$$
The commutator between the vector fields  $X_i$ and $Y_j$ is $0$ except when the indexes are the same $i=j$, in which case we have
$$
[X_i,Y_i]=-4\partial_t.
$$
\par
The intrinsic (or horizontal)  gradient of a smooth function $u$ at the point $P$ is  given by
$$
\nabla_{\mathbb{H}^n}u(P)=\sum_{i=1}^n(X_iu(P)X_i(P)+Y_iu(P)Y_i(P)).
$$
The horizontal tangent space at the point $P$ is the $2n$-dimensional space 
 $$\mbox{span}\{X_1,\dots,X_n,Y_i,\dots,Y_n\}.$$ 
 We build a metric in the horizontal tangent space by declaring that 
 the set of vectors $\{X_1,\dots,X_n,Y_i,\dots,Y_n\}$ is an orthonormal basis. Thus for every pair of horizontal vectors  $U=\sum_{j=1}^n(\alpha_{1,j}X_{j}(P)+\beta_{1,j}Y_j(P))$  and 
$V=\sum_{j=1}^n(\alpha_{2,j}X_{j}(P)+\beta_{2,j}Y_j(P))$
we have the natural inner product
$$
\langle U,V\rangle=\sum_{i=1}^n\alpha_{1,j}\alpha_{2,j}+\beta_{1,j}\beta_{2,j}.
$$  
In particular we get the corresponding norm 
$$
|U|=\sqrt{\sum_{i=1}^n\left(\alpha_{1,j}^2+\beta_{1,j}^2\right)}.
$$
The norm of the intrinsic gradient of the smooth function $u$ in $P$ is then given by
$$
|\nabla_{\mathbb{H}^n} u(P)|=\sqrt{\sum_{i=1}^n(X_iu(P))^2+(Y_iu(P))^2}
$$

\par

If $\nabla_{\mathbb{H}^n} u(P)=0$ then we say that the point $P$ is characteristic for the
surface $\{u=u(P)\}.$
For every point $P$ that is not characteristic the intrinsic normal to the surface $\{u=u(P)\}$ is  defined,  up to  orientation,  by the unit horizontal normal vector
$$
\nu(P)=\frac{\nabla_{\mathbb{H}^n}u(P)} {|\nabla_{\mathbb{H}^n} u(P)|}\cdot
$$
A semigroup of anisotropic dilations that are  group homomorphism is defined as follows: for every $r>0$ and  
$P\in \mathbb{H}^n$ let
$$
\delta_r(P)=(rx,ry,r^2t).
$$
A smooth gauge that is homogeneous with respect to the anisotropic dilations $\delta_r$ is given by
 $$
\|(x,y,t)\|=\sqrt[4]{(\mid x\mid^2+\mid y\mid^2)^2+t^2}.
$$
We then have $\|\delta_r(P)\|=r \|P\|$. We use this property to define the gauge ball of radius $r$ centered in $0$ iby
$$
B(0,r)=\{P\in \mathbb{H}^n\colon \|P\|<r\}.
$$

The Haar measure in $\mathbb{H}^n$ turns out to be the Lebesgue measure in 
$\mathbb{R}^{2n+1}$. Moreover  we have 
$$
\mid \delta_\lambda (B(0,1))\mid=\lambda^{2n+2}\mid B(0,1)\mid.
$$
For these properties and much more see the book \cite{CDPT}.
\par
The symmetrized horizontal Hessian matrix of the smooth function $u$ at the point $P$ is  the following $2n\times2n$ matrix:
$$
D_{\mathbb{H}^n}^{2*}u(P)=\frac{1}{2} \left(D_{\mathbb{H}^n}^{2}u(P) +
\left(D_{\mathbb{H}^n}^{2} u(P)\right)^t.
\right)
$$
The $(i,j)$-entry of $D_{\mathbb{H}^n}^{2}u$ is given by
\begin{itemize}
\item[(1)]  $X_i u\, X_j u$ for $1\le i,j,\le n$,
\item[(2)]  $X_i u\, Y_{j-n} u$ for $1\le i\le n$, $n+1\le j\le 2n$,
\item[(3)]  $X_{i-n}u\, Y_j u$ for $n+1\le i\le 2n$, $1\le j\le n$, and
\item[(4)]  $X_{i-n}u\, Y_{j-n} u$ for $n+1\le i\le 2n$, $n+1\le j\le 2n$.\
\end{itemize}\par
Next,  we briefly review  the  Taylor Formula adapted to our framework. 
Let $u$ be a smooth function  defined in $\Omega$, an open neighborhood of $0$ .  Let  $\epsilon_0$ be  a positive small number such that  for $\|P\|\le \epsilon_0$ and 
for   $0\leq s\leq 1$  the points $\delta_{s}(P)\in \Omega$. In this way  the function
$$
g(s)=u(\delta_{s}(P))=u(s x,s y,s^2 t)
$$
is well defined for every $s\in[0,1].$ 
By the classical Taylor's formula centered in $0$, we get
$$
g(s)=g(0)+g'(0)s+\frac{1}{2}g''(0)s^2+o(s^2),
$$
as $s\to0^+$.
Computing derivatives we get
\begin{equation}\label{rmk}
g'(s)=\langle\nabla_{\mathbb{H}^n}u(\delta_{s}(P))),(x,y)\rangle+2st\partial_tu(\delta_{s}(P)),
\end{equation}
and
\begin{equation}
\begin{split}
g''(s)
=\langle &D_{\mathbb{H}^n}^{2*}u(\delta_{s}(P))(x,y),(x,y)\rangle\\
+&2st(\partial_tX_iu(\delta_{s}(P))+\partial_tY_iu(\delta_{s}(P))\\
+&2t\partial_tu(\delta_{s}(P))+4s^2t\partial_{tt}u(\delta_{s}(P)).
\end{split}
\end{equation}

Therefore we get the expansion
$$u(\delta_s(P))=u(0)+ \langle\nabla_{\mathbb{H}^n}u(0),(sx,sy)\rangle+2 s^2 t \partial_t u(0)+
\frac{1}{2} \langle D_{\mathbb{H}^n}^{2*}u(0)(sx,sy),(sx,sy)\rangle\\
\\
+ o(s^2).
$$
Writing $Q=\delta_s(P)$, noting that $\|Q\|=s \|P\|$, that
$Q=(sx,sy,s^2t)$ and relabeling we get the horizontal Taylor  expansion valid
 for $P$ near zero
\begin{equation}\label{taylor}
\begin{split}
u(P)=&u(0)+\langle\nabla_{\mathbb{H}^n}u(0),(x,y)\rangle+2\, t\, \partial_tu(0))\\
&+\frac{1}{2}
\left(\langle D_{\mathbb{H}^n}^{2*}u(0)(x,y),(x,y)\rangle+o(\|(x,y,t)\|^2\right).
\end{split}
\end{equation}

\section{Key tools for the proof}\label{Mean_formulas}
\begin{lemma}
Let u be a smooth function. If $\nabla_{\mathbb{H}^n} u(0)\not=0,$ then there exists $\epsilon_0>0$ such that for every $\epsilon\in(0,\epsilon_0)$  there exist points $P_{\epsilon, m},P_{\epsilon, M}\in \partial B(0,\epsilon)$ such that $$\max_{\overline{B(0,\epsilon)}}u=u(P_{\epsilon, M})$$ and $$\min_{\overline{B(0,\epsilon)}}u=u(P_{\epsilon,m}).$$
\end{lemma}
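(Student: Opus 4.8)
The plan is to rule out interior extrema. Since the horizontal gradient does not vanish at the origin, it cannot vanish anywhere on a sufficiently small closed gauge ball; on the other hand, at any interior extremum of a smooth function the full Euclidean gradient must vanish, and this in turn forces the horizontal gradient to vanish. This contradiction confines both extrema to the boundary sphere.

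First I would fix $\epsilon_0$. Because $u$ is smooth, each $X_iu$ and $Y_iu$ is continuous, hence $|\nabla_{\mathbb{H}^n}u|$ is continuous; since $\nabla_{\mathbb{H}^n}u(0)\neq 0$ there is $\epsilon_0>0$ (which I also shrink so that $\overline{B(0,\epsilon_0)}$ lies in the domain of smoothness of $u$) with $\nabla_{\mathbb{H}^n}u(P)\neq 0$ for all $P\in\overline{B(0,\epsilon_0)}$. I would also record that, the gauge being continuous with $\|\delta_r P\|=r\|P\|$, the set $B(0,\epsilon)$ is open, $\overline{B(0,\epsilon)}=\{\|P\|\le\epsilon\}$ is compact, and its topological boundary is exactly $\partial B(0,\epsilon)=\{\|P\|=\epsilon\}$.

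Next, for $\epsilon\in(0,\epsilon_0)$, continuity of $u$ together with compactness of $\overline{B(0,\epsilon)}$ yields points $P_{\epsilon,M},P_{\epsilon,m}$ realizing the maximum and the minimum of $u$. Suppose, for contradiction, that one of them, say the maximizer $P_{\epsilon,M}$, lies in the open ball. Then $u$ has a local maximum at an interior point of its domain, so all Euclidean partial derivatives $\partial_{x_i}u,\partial_{y_i}u,\partial_t u$ vanish there. From the defining formulas $X_iu=\partial_{x_i}u+2y_i\partial_t u$ and $Y_iu=\partial_{y_i}u-2x_i\partial_t u$ this immediately gives $X_iu(P_{\epsilon,M})=Y_iu(P_{\epsilon,M})=0$ for every $i$, hence $\nabla_{\mathbb{H}^n}u(P_{\epsilon,M})=0$, contradicting the choice of $\epsilon_0$. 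The identical argument applies to the minimizer. Therefore both extrema are attained on $\partial B(0,\epsilon)$.

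There is no serious obstacle here; the one point requiring a moment of care is that an interior extremum provides the vanishing of the \emph{Euclidean} gradient, not of the horizontal gradient directly, so one must observe that the former implies the latter through the explicit form of the vector fields (the converse implication, false in general, is never needed). A secondary technical check is the identification of the topological boundary of the open gauge ball with the level set $\{\|P\|=\epsilon\}$, which follows from the dilation homogeneity of the gauge.
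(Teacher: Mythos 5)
Your proof is correct and rests on the same key observation as the paper's: an interior extremum forces the Euclidean gradient, and hence the horizontal gradient, to vanish, contradicting the nonvanishing of $\nabla_{\mathbb{H}^n}u$ near the origin. The only difference is organizational --- you fix $\epsilon_0$ up front via continuity of $|\nabla_{\mathbb{H}^n}u|$, while the paper runs a sequential contradiction argument with $\epsilon_j\to 0$ and passes to the limit at $0$; your version is, if anything, slightly cleaner.
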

\begin{proof}
Let us consider the case of the maximum, the case of the minimum being analogous. Let us proceed by contradiction. Assume that a sequence of positive numbers $\{\epsilon_j\}_{j\in \mathbb{N}}\subset\mathbb{R}^+$ and a sequence of points $\{P_j\}_{j\in \mathbb{N}}\subset {B(0,\epsilon_j)}$  such that $\epsilon_j\to 0,$ as $j\to+\infty$ and $$\max_{\overline{B(0,\epsilon_j)}}u=u(P_{j}).$$
 Then for every $j\in \mathbb{N},$ we have that $\nabla u(P_j)=0$ because $P_j$ is in the interior of $B(0,\epsilon_j)$. Hence we get a contradiction with the fact that by continuity of $\nabla u$ gives $\nabla u(0)=0$,  which implies $\nabla_{\mathbb{H}^n} u(0)=0.$
\end{proof}
\begin{lemma}
\label{lemma1}
 For small $\epsilon>0$, consider points $P_{\epsilon, M}$ and $P_{\epsilon, m}$ in $\partial B(0,\epsilon)$ such that 
$$\max_{\overline{B(0,\epsilon)}}u=u(P_{\epsilon, M})
\text{  and  } \min_{\overline{B(0,\epsilon)}}u=u(P_{\epsilon, m}).$$
Whenever $\nabla_{\mathbb{H}^n}u(0)\not=0$  we have
$$
\lim_{\epsilon\to 0}\frac{(x_{M,\epsilon},y_{M,\epsilon})}{\epsilon}=\frac{\nabla_{\mathbb{H}^n}u(0)}{\mid \nabla_{\mathbb{H}^n}u(0)\mid} 
$$
and
$$
\lim_{\epsilon\to 0}\frac{(x_{m,\epsilon},y_{m,\epsilon})}{\epsilon}=-\frac{\nabla_{\mathbb{H}^n}u(0)}{\mid \nabla_{\mathbb{H}^n}u(0)\mid},
$$
where $P_{\epsilon}=(x_{\epsilon},y_{\epsilon},t_{\epsilon})\in\mathbb{H}^n$.
Moreover, we also have
$$
\lim_{\epsilon\to 0}\frac{\mid t_\epsilon\mid}{\epsilon^3}=\frac{2\mid u_t (0)\mid}{\mid \nabla_{\mathbb{H}^n}u(0)\mid}
$$
\end{lemma}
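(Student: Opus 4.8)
The plan is to rescale everything to the unit gauge sphere and then read off the location of the extrema from a constrained optimization. Set $g=|\nabla_{\mathbb{H}^n}u(0)|$ and $\nu=\nabla_{\mathbb{H}^n}u(0)/g$. For the maximum point write $P_{\epsilon,M}=(x_{M,\epsilon},y_{M,\epsilon},t_{M,\epsilon})$ and put
\[
Q_\epsilon=\delta_{1/\epsilon}(P_{\epsilon,M})=\Big(\tfrac{x_{M,\epsilon}}{\epsilon},\tfrac{y_{M,\epsilon}}{\epsilon},\tfrac{t_{M,\epsilon}}{\epsilon^2}\Big)=(\xi_\epsilon,\eta_\epsilon,\tau_\epsilon),
\]
which lies on the unit gauge sphere $\{\|Q\|=1\}$ because $\|P_{\epsilon,M}\|=\epsilon$. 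Maximizing $u$ over $\overline{B(0,\epsilon)}$ is the same as maximizing $Q\mapsto u(\delta_\epsilon Q)$ over $\{\|Q\|=1\}$, and in these variables the three assertions become $(\xi_\epsilon,\eta_\epsilon)\to\nu$, its analogue with $-\nu$ for the minimum, and $|\tau_\epsilon|/\epsilon\to 2|u_t(0)|/g$. From the Taylor expansion \eqref{taylor} applied at $\delta_\epsilon Q=(\epsilon\xi,\epsilon\eta,\epsilon^2\tau)$, together with smoothness of $u$, one has the uniform expansion
\[
u(\delta_\epsilon Q)=u(0)+\epsilon\,g\,\langle\nu,(\xi,\eta)\rangle+O(\epsilon^2),\qquad \|Q\|=1.
\]

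First I would establish the two horizontal limits. On $\{\|Q\|=1\}$ one has $(|\xi|^2+|\eta|^2)^2+\tau^2=1$, so $|(\xi,\eta)|\le 1$ with equality only if $\tau=0$, whence $\langle\nu,(\xi,\eta)\rangle\le|(\xi,\eta)|\le 1$ with equality exactly at $Q^\ast=(\nu,0)$. Comparing the maximizer against the competitor $Q^\ast$, the inequality $u(\delta_\epsilon Q_\epsilon)\ge u(\delta_\epsilon Q^\ast)$ and the expansion above force $\langle\nu,(\xi_\epsilon,\eta_\epsilon)\rangle\ge 1-O(\epsilon)$. Since $\langle\nu,(\xi_\epsilon,\eta_\epsilon)\rangle\le|(\xi_\epsilon,\eta_\epsilon)|\le 1$, both the alignment and the length are squeezed to their maxima, so $(\xi_\epsilon,\eta_\epsilon)\to\nu$ and $\tau_\epsilon^2=1-|(\xi_\epsilon,\eta_\epsilon)|^4\to 0$. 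This proves the first limit, and the minimum is identical after replacing $\nu$ by $-\nu$.

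The delicate point is the vertical limit, for which I would use Lagrange multipliers rather than the second-order part of \eqref{taylor}. Because $(\xi_\epsilon,\eta_\epsilon)\to\nu\neq0$, for small $\epsilon$ the point $Q_\epsilon$ lies where $\{\|Q\|=1\}$ is a smooth hypersurface, so there is $\lambda_\epsilon$ with $\nabla_Q\big(u\circ\delta_\epsilon\big)(Q_\epsilon)=\lambda_\epsilon\,\nabla_Q\big(\|Q\|^4\big)(Q_\epsilon)$. Writing $\rho_\epsilon=|(\xi_\epsilon,\eta_\epsilon)|$, denoting by $\nabla_{(x,y)}u$ the Euclidean gradient of $u$ in the $2n$ horizontal variables, and using $\nabla_Q\|Q\|^4=\big(4(|\xi|^2+|\eta|^2)\xi,\,4(|\xi|^2+|\eta|^2)\eta,\,2\tau\big)$, the horizontal and vertical components of this identity read
\[
\epsilon\,\nabla_{(x,y)}u(\delta_\epsilon Q_\epsilon)=4\lambda_\epsilon\rho_\epsilon^2\,(\xi_\epsilon,\eta_\epsilon),\qquad \epsilon^2\,u_t(\delta_\epsilon Q_\epsilon)=2\lambda_\epsilon\tau_\epsilon.
\]
Taking norms in the first identity and using $\delta_\epsilon Q_\epsilon\to0$, $\rho_\epsilon\to1$, and $|\nabla_{(x,y)}u(0)|=|\nabla_{\mathbb{H}^n}u(0)|=g$ gives $|\lambda_\epsilon|/\epsilon\to g/4$; in particular $\lambda_\epsilon\neq0$ for small $\epsilon$. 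Substituting into the vertical identity then yields
\[
\frac{|t_{M,\epsilon}|}{\epsilon^3}=\frac{|\tau_\epsilon|}{\epsilon}=\frac{|u_t(\delta_\epsilon Q_\epsilon)|}{2\,(|\lambda_\epsilon|/\epsilon)}\longrightarrow\frac{|u_t(0)|}{g/2}=\frac{2|u_t(0)|}{g},
\]
and the same computation at the minimum point gives the identical limit.

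The main obstacle is precisely this vertical estimate: the leading-order expansion locates the extrema on the equator $\{\tau=0\}$ and is completely blind to $\tau_\epsilon$, so the constant $2|u_t(0)|/g$ cannot be seen at first order and must be extracted from the first-order optimality conditions, where the $O(\epsilon)$ horizontal gradient competes with the $O(\epsilon^2)$ vertical derivative across a sphere whose horizontal radius depends quadratically on $\tau$. Note that this route uses only the first-order part of \eqref{taylor}, so it is insensitive to the precise value of the second-order coefficients. The two facts that make the Lagrange computation legitimate, namely that $Q_\epsilon$ sits in the smooth part of the gauge sphere and that $\lambda_\epsilon\neq0$, both follow from $(\xi_\epsilon,\eta_\epsilon)\to\nu\neq0$ together with the non-vanishing of the horizontal gradient near the origin.
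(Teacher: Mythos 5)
Your proof is correct, and it uses the same two basic tools as the paper --- the Lagrange multiplier system for the gauge--sphere constraint and the first--order horizontal Taylor expansion --- but it deploys them in a different and arguably cleaner order. The paper runs the Lagrange system first, solves it algebraically for $\xi,\eta,t$ (obtaining the horizontal direction only up to sign and needing a separate case analysis when $u_t(P_{\epsilon_j})=0$ or $\lambda_{\epsilon_j}=0$), and only at the end invokes the Taylor expansion to fix the sign of the limit. You instead get the full horizontal limits, sign included, in one stroke from the first--order expansion by comparing the maximizer against the competitor $Q^\ast=(\nu,0)$ and squeezing $\langle\nu,(\xi_\epsilon,\eta_\epsilon)\rangle\le|(\xi_\epsilon,\eta_\epsilon)|\le 1$; you then use the Lagrange identities only to extract $|\lambda_\epsilon|/\epsilon\to g/4$ from the norm of the horizontal equation and to read off $|\tau_\epsilon|/\epsilon$ from the vertical one. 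Two things this buys: the degenerate cases $u_t=0$ and $\lambda_\epsilon=0$ disappear (the vertical identity $\epsilon^2u_t=2\lambda_\epsilon\tau_\epsilon$ is solved for $\tau_\epsilon$ using only $\lambda_\epsilon\neq0$, which you have established), and the algebra is transparent because you take norms of the Euclidean horizontal gradient and use $\nabla_{(x,y)}u(0)=\nabla_{\mathbb{H}^n}u(0)$ at the origin rather than recombining the Lagrange equations into $X_ju$ and $Y_ju$ at the moving point, which is where the paper's extra $y_{\epsilon,j}t_\epsilon$ terms and the stray constant $\gamma$ come from. The only points worth making explicit in a final write--up are the uniformity of the $O(\epsilon^2)$ remainder over the compact unit gauge sphere (clear for smooth $u$) and the fact that $\nabla_Q\|Q\|^4$ vanishes only at the origin, so the constraint is regular at every point of the sphere and the multiplier exists without further restriction.
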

\begin{proof}[Proof of Lemma \ref{lemma1}]
We consider the case of the maximum by using the method of Lagrange multipliers. 
There exists $\lambda_\epsilon\in \mathbb{R}$ such that
\begin{equation}
\left\{
\begin{array}{rcl}
u_{x_j}(P_{M,\epsilon})&=&4\lambda_\epsilon\, x_{\epsilon,j}(| x_{\epsilon}|^2+|y_{\epsilon}|^2)\\
u_{y_j}(P_{M,\epsilon})&=&4\lambda_\epsilon \,y_{\epsilon,j}(| x_{\epsilon}|^2+|y_{\epsilon}|^2)\\
u_{t}(P_{M,\epsilon})&=&2\lambda_\epsilon\, t_{\epsilon}\\
(|x_{\epsilon}|2+|y_{\epsilon}|^2)^2+\gamma \, t_{\epsilon}^2&=&\epsilon^4
\end{array}
\right.
\end{equation}
Thus we get
\begin{equation*}
\begin{split}
X_ju(P_{M,\epsilon})&=4\lambda_\epsilon\, x_{\epsilon,j}(|x_{\epsilon}|^2+|y_{\epsilon}|^2)+4\lambda_\epsilon \, y_{\epsilon,j}\,\gamma \,t_{\epsilon}\\
&=4\lambda_\epsilon \left(x_{\epsilon,j}(| x_{\epsilon}|^2+| y_{\epsilon}|^2)+ y_{\epsilon,j} \,t_{\epsilon}\right)
\end{split}
\end{equation*}
and
\begin{equation*}\begin{split}
Y_ju(P_{M,\epsilon})&=4\lambda_\epsilon \,y_{\epsilon,j}(|x_{\epsilon}|^2+|y_{\epsilon}|^2)-4\lambda_\epsilon\,  x_{\epsilon,j}\, \gamma\, t_{\epsilon}\\
&=4\lambda_\epsilon\left(y_{\epsilon,j}(|x_{\epsilon}|^2+|y_{\epsilon}|^2)- x_{\epsilon,j} \,t_{\epsilon}\right)
\end{split}\end{equation*}
To simplify the notation write $\rho_\epsilon=|x_{\epsilon}|^2+|y_{\epsilon}|^2$,  $x_j=x_{\epsilon,j}$, $y_j=y_{\epsilon,j}$ and $\rho_\epsilon=\rho$.
Since the calculation is the same for every $j=1\dots n$,  it suffices to solve the following system
 \begin{equation}
\left\{
\begin{array}{rcl}
Xu&=&4\lambda_\epsilon (\xi\rho^2+ \eta t)\\
Yu&=&4\lambda_\epsilon(\eta \rho^2- \xi t)\\
u_{t}&=&2\lambda_\epsilon t\\
\rho^4+\gamma t^2&=&\epsilon^4,
\end{array}
\right.
\end{equation}
 where $\xi=x_j$ and $\eta=y_j.$
Hence,  it follows that if $u_t(P_{\epsilon,j})\not=0$ we have

\begin{equation}
\left\{
\begin{array}{rcl}
\medskip\medskip
\xi\rho^2+ \eta t&=&\displaystyle{\frac{t \,Xu}{2u_t}}\\
\medskip\medskip
- \xi t+\eta \rho^2&=&\displaystyle{\frac{t\,Yu}{2u_t}}\\
\medskip\medskip
\lambda_\epsilon&=&\displaystyle{\frac{u_{t}}{2t}} \\
\medskip
\rho^4+\gamma t^2&=&\epsilon^4.
\end{array}
\right.
\end{equation}
As a consequence solving for $\xi$ and $\eta$, squaring and adding we get

\begin{equation}
\rho^2=\frac{t^2}{4\epsilon^4}\frac{\mid\nabla_{\mathbb{H}^n}u\mid^2}{u_t^2}.
\end{equation}
Thus,  we obtain:
\begin{equation}\label{eqdelta}
\mid t\mid=2\, \rho\, \epsilon^2\frac{\mid u_t\mid}{\mid\nabla_{\mathbb{H}^n}u\mid}.
\end{equation}
\begin{equation}\label{eqxi2}
\xi=\mbox{sgn}(t) \frac{\rho^3}{\epsilon^2}\frac{Xu}{\mid\nabla_{\mathbb{H}^n}u\mid}- \mbox{sgn}(t)\frac{2\rho^2\mid u_t\mid Yu}{\mid\nabla_{\mathbb{H}^n}u\mid}
\end{equation}
\begin{equation}\label{eqeta2}
\eta=\mbox{sgn}(t)\frac{\rho^3}{\epsilon^2}\frac{Yu}{\mid\nabla_{\mathbb{H}^n}u\mid}+\mbox{sgn}(t)\frac{2\rho^2\mid u_t\mid Xu}{\mid\nabla_{\mathbb{H}^n}u\mid}.
\end{equation}
\noindent Squaring and summing one more time and keeping in mind that $$(\frac{Xu}{\mid\nabla_{\mathbb{H}^n}u\mid})^2+(\frac{Yu}{\mid\nabla_{\mathbb{H}^n}u\mid})^2=1$$
we get 
$$
\rho^2=\frac{\rho^6}{\epsilon^4}+4\rho^4 u_t^2,
$$
which implies
$$
\frac{\rho^4}{\epsilon^4}+4\rho^2 u_t^2=1.
$$
We deduce that  $\rho\sim\epsilon$  whenever $\epsilon\to 0,$ since $u_t(0)$ is bounded. 
As a consequence it follows from (\ref{eqxi2}) and (\ref{eqeta2}) that
$$\frac{\xi}{\epsilon}\to \pm\frac{Xu}{\mid\nabla_{\mathbb{H}^n}u\mid} $$
and
$$\frac{\eta}{\epsilon}\to \pm \frac{Yu}{\mid\nabla_{\mathbb{H}^n}u\mid}. $$

On the other hand recalling (\ref{eqdelta}) and the fact that $\rho\sim\epsilon,$ as $\epsilon\to 0,$ we get 

$$
\lim_{\epsilon\to 0}\frac{|t|}{\epsilon^3}=\frac{2| u_t(0)|}{|\nabla_{\mathbb{H}^n}u(0)|}.
$$
If there is a sequence of point $\{P_{\epsilon_j}\}_{j\in\mathbb{N}}$ such that $u_t(P_{\epsilon_j})=0,$ then either $t_{\epsilon_j}=0,$ or $\lambda_{\epsilon_j}=0.$ In the first case we get that $\rho^4=\epsilon^4,$ so that $\rho=\epsilon.$ Thus we get
$$Xu(P_{\epsilon_j})=4\lambda_{\epsilon_j}\,\xi\epsilon^2$$
and
$$Yu(P_{\epsilon_j})=4\lambda_{\epsilon_j}\,\eta\epsilon^2.$$
As a consequence,  squaring and summing once again, we get
$$|\nabla_{\mathbb{H}^n}u(P_{\epsilon_j})|^2= 16\lambda_{\epsilon_j}^2\epsilon^4\rho^2.$$
 So that  since $|\nabla_{\mathbb{H}^n}u(0)|\not=0,$ this implies 
$$
\lambda_{\epsilon_j}^2\epsilon^4\rho^2\sim \lambda_{\epsilon_j}^2\epsilon^6\to \frac{|\nabla_{\mathbb{H}^n}u(0)|^2}{16},
$$
that is
$$
|\lambda_{\epsilon_j}|\,\epsilon^3\to\frac{|\nabla_{\mathbb{H}^n}u(0)|}{4}.
$$
As a consequence, as $\epsilon_j\to 0$ we get
$$
Xu(P_{\epsilon_j})= 4\lambda_{\epsilon_j}\frac{\xi}{\epsilon}\epsilon^3\sim \pm|\nabla_{\mathbb{H}^n}u(0)|\frac{\xi}{\epsilon}
$$
$$
Yu(P_{\epsilon_j})= 4\lambda_{\epsilon_j}\frac{\eta}{\epsilon}\epsilon^3\sim \pm|\nabla_{\mathbb{H}^n}u(0)|\frac{\eta}{\epsilon},
$$
that is 
$$
\frac{\xi}{\epsilon}\to \pm\frac{Xu(0)}{|\nabla_{\mathbb{H}^n}u(0)|}
$$
and
$$
\frac{\eta}{\epsilon}\to \pm\frac{Yu(0)}{|\nabla_{\mathbb{H}^n}u(0)|}.
$$
In the case that there exists a  sequence of points $\{P_{\epsilon_j}\}_{j\in\mathbb{N}}$ such that  such that $\lambda_{\epsilon_j}=0,$ then we would get
$$
Xu(P_{\epsilon_j})=0 \text{  and  } Yu(P{\epsilon_j})=0,
$$
getting a contradiction with the assumption that $\nabla_{\mathbb{H}^n}u(0)\not=0$.

We just need to justify the sign of the limit. Using the Taylor's formula in the Heisenberg group we get:
\begin{equation*}
\begin{split}
u(P_{\epsilon,M})=& \, u(0)+\langle\nabla_{\mathbb{H}^n}u(0),(x_{\epsilon,M},y_{\epsilon,M})\rangle\\
+&\frac{1}{2}
\left(\langle D_{\mathbb{H}^n}^{2*}u(0)(x_{\epsilon,M},y_{\epsilon,M}),(x_{\epsilon,M},y_{\epsilon,M})\rangle+2t\partial_tu(0))+o(\epsilon^2\right)
\end{split}
\end{equation*}
Hence, dividing by $\epsilon>0$ we get
\begin{equation*}
\begin{split}
0&\leq \frac{u(P_{\epsilon,M})-u(0)}{\epsilon}=\langle\nabla_{\mathbb{H}^n}u(0),(\frac{x_{\epsilon,M}}{\epsilon},\frac{y_{\epsilon,M}}{\epsilon})\rangle\\
&+\frac{1}{2}
\left(\langle D_{\mathbb{H}^n}^{2*}u(0)(\frac{x_{\epsilon,M}}{\epsilon},\frac{y_{\epsilon,M}}{\epsilon}),(x_{\epsilon,M},y_{\epsilon,M})\rangle+2\frac{t_\epsilon}{\epsilon}\partial_tu(0))+o(\epsilon) \right).
\end{split}
\end{equation*}
Letting $\epsilon\to 0$ we get
$$
0\leq \langle\nabla_{\mathbb{H}^n}u(0),\lim_{\epsilon\to 0}(\frac{x_{\epsilon,M}}{\epsilon},\frac{y_{\epsilon,M}}{\epsilon})\rangle, 
$$
which implies 
$$
\lim_{\epsilon\to 0}\left(\frac{x_{\epsilon,M}}{\epsilon},\frac{y_{\epsilon,M}}{\epsilon}\right)=\frac{\nabla_{\mathbb{H}^n}u(0)}{ |\nabla_\mathbb{H}^nu(0)|}.
$$ 
\end{proof}
\begin{lemma}\label{meanformulaH}
Let $u$ be a smooth function defined in a open subset of the Heisenberg group.
Then
$$
\kint_{B(P,\epsilon)}u(x,y,t)=u(P)+C(n)\Delta_{\mathbb{H}^n}u(P)\epsilon^2+o(\epsilon^2),
$$
as $\epsilon\to 0,$ where
$$
C(n)=\frac{1}{2(n+1)}\frac{\int_{0}^{1}(1-s^2)^{\frac{n+1}{2}}\,ds}{\int_{0}^{1}(1-s^2)^{\frac{n}{2}}\,ds}
$$
\end{lemma}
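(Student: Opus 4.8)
The plan is to move the computation to the origin and then integrate the horizontal Taylor expansion \eqref{taylor} term by term, using the symmetries of the gauge ball to annihilate every odd contribution. Since the gauge distance is left-invariant, the left translation $\tau_P(Q)=P\cdot Q$ maps $B(0,\epsilon)$ onto $B(P,\epsilon)$, preserves the Haar (Lebesgue) measure, and commutes with the left-invariant fields $X_i,Y_i$ and hence with $\Delta_{\mathbb{H}^n}$. Replacing $u$ by $u\circ\tau_P$, I may therefore assume $P=0$ and must establish
\[
\kint_{B(0,\epsilon)}u=u(0)+C(n)\,\Delta_{\mathbb{H}^n}u(0)\,\epsilon^2+o(\epsilon^2).
\]

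Inserting \eqref{taylor} into the average, the constant term returns $u(0)$. The gauge ball $B(0,\epsilon)=\{(x,y,t):(|x|^2+|y|^2)^2+t^2<\epsilon^4\}$ is invariant under each of the reflections $x_i\mapsto-x_i$, $y_i\mapsto-y_i$ and $t\mapsto-t$, so the linear term $\langle\nabla_{\mathbb{H}^n}u(0),(x,y)\rangle$ and the $t$-linear term both average to zero, and in the quadratic form $\langle D_{\mathbb{H}^n}^{2*}u(0)(x,y),(x,y)\rangle$ every mixed moment vanishes. The Taylor remainder is controlled by smoothness: it is bounded by $\omega(\|(x,y,t)\|)\,\|(x,y,t)\|^2$ with $\omega\to0$, and since $\|(x,y,t)\|<\epsilon$ on $B(0,\epsilon)$ its average is at most $\omega(\epsilon)\,\epsilon^2=o(\epsilon^2)$.

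What remains are the diagonal second moments. By the $O(2n)$ rotational symmetry of $B(0,\epsilon)$ in the $(x,y)$-variables these are all equal; write $A(\epsilon):=\kint_{B(0,\epsilon)}x_1^2$. Because symmetrization does not change the trace and $\trace D_{\mathbb{H}^n}^{2*}u(0)=\sum_{i}(X_i^2u+Y_i^2u)(0)=\Delta_{\mathbb{H}^n}u(0)$, the only surviving contribution is
\[
\tfrac12\,A(\epsilon)\,\trace D_{\mathbb{H}^n}^{2*}u(0)=\tfrac12\,A(\epsilon)\,\Delta_{\mathbb{H}^n}u(0).
\]

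It remains to evaluate $A(\epsilon)$. The anisotropic dilation $\delta_\epsilon$ carries $B(0,1)$ onto $B(0,\epsilon)$ with Jacobian $\epsilon^{2n+2}$ while turning $x_1^2$ into $\epsilon^2 x_1^2$, so $A(\epsilon)=\epsilon^2 A(1)$ with $A(1)=\kint_{B(0,1)}x_1^2$. Writing $A(1)=\frac{1}{2n}\kint_{B(0,1)}(|x|^2+|y|^2)$ and slicing the unit gauge ball at fixed $t\in(-1,1)$, where $(x,y)$ fills the Euclidean ball of radius $(1-t^2)^{1/4}$ in $\mathbb{R}^{2n}$, spherical coordinates in $(x,y)$ reduce both the numerator and the volume to one-dimensional integrals in $t$, whose quotient is exactly the ratio appearing in $C(n)$. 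I expect this explicit moment evaluation — and in particular reconciling the factor $\tfrac12$ coming from Taylor's formula with the value of the second moment so that the coefficient matches the normalization $C(n)$ in the statement — to be the one genuinely delicate point; the rest is symmetry bookkeeping.
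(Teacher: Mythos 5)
Your overall route coincides with the paper's: translate $P$ to the origin, average the horizontal Taylor expansion \eqref{taylor}, use the reflection symmetries of the gauge ball to kill the linear terms, the $t$-term and the off-diagonal second moments, and reduce the surviving contribution to $\tfrac12 A(\epsilon)\,\trace D_{\mathbb{H}^n}^{2*}u(0)$ with $A(\epsilon)=\kint_{B(0,\epsilon)}x_1^2$. Your reduction $A(\epsilon)=\epsilon^2A(1)$ by the dilation $\delta_\epsilon$ followed by slicing at fixed $t$ is exactly the computation the paper carries out directly on $B(0,\epsilon)$; the difference is purely organizational. Up to this point everything you wrote is correct.

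The step you deferred is, however, precisely where the proof must be completed, and the factor $\tfrac12$ you flagged does \emph{not} reconcile with the constant in the statement. Finishing your own slicing: the slice of $B(0,1)$ at height $t$ is the Euclidean ball of radius $(1-t^2)^{1/4}$ in $\mathbb{R}^{2n}$, so
\[
\kint_{B(0,1)}\bigl(|x|^2+|y|^2\bigr)
=\frac{\frac{\omega(2n)}{2n+2}\int_{-1}^{1}(1-t^2)^{\frac{n+1}{2}}\,dt}{\frac{\omega(2n)}{2n}\int_{-1}^{1}(1-t^2)^{\frac{n}{2}}\,dt}
=\frac{n}{n+1}\,\frac{\int_0^1(1-s^2)^{\frac{n+1}{2}}\,ds}{\int_0^1(1-s^2)^{\frac{n}{2}}\,ds},
\]
whence $A(1)=\frac{1}{2n}\kint_{B(0,1)}(|x|^2+|y|^2)=C(n)$ and your surviving term is $\tfrac12A(\epsilon)\Delta_{\mathbb{H}^n}u(0)=\tfrac12C(n)\epsilon^2\Delta_{\mathbb{H}^n}u(0)$, i.e.\ \emph{half} of what the lemma asserts. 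A direct check confirms this: for $n=1$ and $u=x^2$ one has $\Delta_{\mathbb{H}}u=2$ and $\kint_{B(0,\epsilon)}x^2=\frac{\pi\epsilon^6/3}{\pi^2\epsilon^4/2}=\frac{2}{3\pi}\epsilon^2=C(1)\epsilon^2$, whereas the statement predicts $2C(1)\epsilon^2$. The paper's proof reaches $C(n)$ by writing the integrated quadratic form as $\sum_i(X_i^2u(0)+Y_i^2u(0))\int(x_i^2+y_i^2)$, which double-counts relative to the correct $\sum_i\bigl(X_i^2u(0)\int x_i^2+Y_i^2u(0)\int y_i^2\bigr)=\sum_i(X_i^2u(0)+Y_i^2u(0))\int x_i^2$. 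So your argument cannot terminate with the stated normalization: carried out honestly it yields the coefficient $C(n)/2$ (equivalently, $C(n)$ should carry $4(n+1)$ in place of $2(n+1)$), and this factor propagates into the values of $\alpha$ and $\beta$ used later in Theorem \ref{maintheorem}. You must either adopt the corrected constant or exhibit a compensating factor of $2$, which your (correct) bookkeeping shows does not exist.
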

\begin{proof}
Without loss of generality we set $P=0.$ 
We  average the Taylor expansion \ref{taylor} to get 

\begin{equation}
\begin{split}
\kint_{B(0,\epsilon)}u(x,y,t)=&u(0)
+\kint_{ B(0,\epsilon}\langle\nabla_{\mathbb{H}^n}u(0),(x,y)\rangle\\&+
\frac{1}{2}\int_{ B(0,\epsilon}
\langle D_{\mathbb{H}^n}^{2*}u(0)(x,y),(x,y)\rangle+
\kint_{ B(0,\epsilon}2t\partial_tu(0))+o(\|(x,y,t)\|^2\\
&=u(0)+\frac{1}{2}\int_{B(0,\epsilon)}\langle D_{\mathbb{H}^n}^{2*}u(0)(x,y),(x,y)\rangle +\int_{B(0,\epsilon)}o(\|(x,y,t)\|^2\\
&=u(0)+C(n)\Delta_{\mathbb{H}^n}u(P)\epsilon^2+o(\epsilon^2).
\end{split}
\end{equation}
Indeed we have that the linear terms vanish
\begin{equation*}
\int_{ B(0,\epsilon)}\langle\nabla_{\mathbb{H}^n}u(0),(x,y)\rangle\, dxdydt=\int_{-\epsilon^2}^{\epsilon^2}\int_{S(\epsilon, t)}\langle\nabla_{\mathbb{H}^n}u(0),(x,y)\rangle dxdydt\\
=0,
\end{equation*}
where we have set 
$S(\epsilon, t)= \{(x,y)\colon | x|^2+|y|^2<\sqrt[4]{\epsilon^4-t^2}\}.$
Proceeding analogously for the second order terms we get 
\begin{equation}
\begin{split}
\int_{ B(0,\epsilon)}\langle D_{\mathbb{H}^n}^{2*}u(0)(x,y),&(x,y)\rangle \, dxdydt=\\
&=\int_{-\epsilon^2}^{\epsilon^2}\int_{S(\epsilon, t)}\langle D_{\mathbb{H}^n}^{2*}u(0)(x,y),(x,y)\rangle \, dxdydt\\
&=\sum_{i=1}\left(X_i^2u(0)+Y_i^2u(0)\right)\int_{-\epsilon^2}^{\epsilon^2}\int_{S(\epsilon, t)}(x_i^2+y_i^2) \, dxdydt\\
&=\frac{1}{(n+1)}\frac{\int_{0}^{1}(1-s^2)^{\frac{n+1}{2}}\,ds}{\int_{0}^{1}(1-s^2)^{\frac{n}{2}}\,ds}\,|B(0,\epsilon)|\,\Delta_{\mathbb{H}^n}u(0)\,\epsilon^{2}
\end{split}
\end{equation}
since 
\begin{equation}
\begin{split}
\int_{-\epsilon^2}^{\epsilon^2}\int_{S(\epsilon, t)}(x_i^2+y_i^2) \, dxdydt=&\frac{1}{n}\int_{-\epsilon^2}^{\epsilon^2}\int_{S(x,t)}(| x|^2+| y|^2)\,  dxdydt\\
=&\frac{\omega(2n)}{n}\int_{-\epsilon^2}^{\epsilon^2}\int_0^{\sqrt[4]{\epsilon^4-t^2}}\rho^{2}\rho^{2n-1}d\rho dt\\
=&\frac{\omega(2n)}{n(n+1)}\int_{0}^{\epsilon^2}(\epsilon^4-t^2)^{\frac{n+1}{2}}dt\\
=&\frac{\omega(2n)}{n(n+1)}\left(\int_{0}^{1}(1-s^2)^{\frac{n+1}{2}}dt\right)\epsilon^{2n+4}\\
=&\frac{1}{(n+1)}\frac{\int_{0}^{1}(1-s^2)^{\frac{n+1}{2}}\,dt}{\int_{0}^{1}(1-s^2)^{\frac{n}{2}}\,dt}|B(0,\epsilon)|\, \epsilon^{2}
\end{split}
\end{equation}
where we have denoted by $\omega(n)$ the Euclidean surface area of they unit sphere $\partial B(0,1)$ in $\mathbb{R}^{2n}$,  
and have used the formula $|B(0,\epsilon)|=\epsilon^{2n+2}\, \frac{\omega(2n)}{n}\int_{0}^{1}(1-t^2)^{\frac{n}{2}}\,dt$ for
the Lebesgue measure in $\mathbb{R}^{2n+1}$ of the gauge ball of radious 
$\epsilon$. 
\end{proof}
\section{Proof of Theorem \ref{maintheorem}}
The first step in the proof of Theorem \ref{maintheorem} is the  following expansion valid for smooth functions.

Let  $P\in\Omega$ be a point and  $\phi$ be a $C^2$-function defined in a neighborhood of $P.$ 
We denote by $P_{\epsilon, M}\in \overline{B(P,\epsilon)}$ and $P_{\epsilon,m}\in \overline{B(P,\epsilon)}$ the points of maxima and minima
$$\phi(P_{\epsilon, M})=\max_{\overline{B(P,\epsilon)}}\phi\\,\,\text{    and   }\,\,\phi(P_{\epsilon, m})=\min_{\overline{B(P,\epsilon)}}\phi.$$

\begin{lemma}\label{lemmakey}
Let $p\in (1, +\infty)$and $\phi$ be a $C^2$-function in a domain $\Omega\subset\mathbb{H}^n$. Let $C(n)$, $\alpha$, $\beta$  be given in the statement of Theorem \ref{maintheorem}. 
Consider the vectors
	$$(h_{\epsilon}, 
	l_{\epsilon})=\left(\frac{x_{\epsilon, M}-x}{\epsilon},
	\frac{y_{\epsilon, M}-y}{\epsilon}\right).
	$$
\noindent The following expansions hold near every $P\in\Omega$, 

\begin{equation*}
\begin{split}
 \beta\,  C(n)\epsilon^2 
 \bigg[
 \Delta_{\mathbb{H}^n}\phi(P)+&(p-2)
 \langle
  D_{\mathbb{H}^n}^{2,*}
 \phi(P)(h_{\epsilon}, l_{\epsilon}),(h_{\epsilon}, l_{\epsilon})
 \rangle
 \bigg]
 \ge \\
& \beta\kint_{B(P,\epsilon)}\phi(x,y,t)+\frac{\alpha}{2}\left(\min_{\overline{B(P,\epsilon)}}\phi +\max_{\overline{B(P,\epsilon)}}\phi\right) -\phi(P)+ o(\epsilon^{2}),
\end{split}
\end{equation*}
as $\epsilon\to 0$
and
\begin{equation*}
\begin{split}
 \beta\,  C(n)\epsilon^2 
 \bigg[
 \Delta_{\mathbb{H}^n}\phi(P)+&(p-2)
 \langle
  D_{\mathbb{H}^n}^{2,*}
 \phi(P)(h_{\epsilon}, l_{\epsilon}),(h_{\epsilon}, l_{\epsilon})
 \rangle
 \bigg]
 \le \\
& \beta\kint_{B(P,\epsilon)}\phi(x,y,t)+\frac{\alpha}{2}\left(\min_{\overline{B(P,\epsilon)}}\phi +\max_{\overline{B(P,\epsilon)}}\phi\right) -\phi(P)+ o(\epsilon^{2}),
\end{split}
\end{equation*}$\epsilon\to 0.$
\end{lemma}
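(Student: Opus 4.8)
The plan is to fix $P$ and, after a left translation (which preserves the gauge balls and commutes with $\nabla_{\mathbb{H}^n}$, $\Delta_{\mathbb{H}^n}$), reduce to the case $P=0$, where the horizontal Taylor expansion \eqref{taylor} is available. I would then assemble the right-hand side from three pieces whose asymptotics are handled separately: the averaged term, which Lemma \ref{meanformulaH} expands as $\kint_{B(0,\epsilon)}\phi=\phi(0)+C(n)\Delta_{\mathbb{H}^n}\phi(0)\epsilon^2+o(\epsilon^2)$; and the two extremal values $\max_{\overline{B(0,\epsilon)}}\phi$ and $\min_{\overline{B(0,\epsilon)}}\phi$, whose \emph{sum} I would expand via \eqref{taylor}. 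Since $\alpha+\beta=1$, the constant contributions $\phi(0)$ recombine and cancel the $-\phi(P)$ on the right, so the entire statement reduces to proving the single asymptotic identity
$$\max_{\overline{B(0,\epsilon)}}\phi+\min_{\overline{B(0,\epsilon)}}\phi=2\phi(0)+\epsilon^2\langle D_{\mathbb{H}^n}^{2,*}\phi(0)(h_\epsilon,l_\epsilon),(h_\epsilon,l_\epsilon)\rangle+o(\epsilon^2).$$

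Writing $P_{\epsilon,M}=(x_M,y_M,t_M)$ and $P_{\epsilon,m}=(x_m,y_m,t_m)$ and applying \eqref{taylor} at each, the main obstacle becomes visible at once: the first-order term $\langle\nabla_{\mathbb{H}^n}\phi(0),(x,y)\rangle$ is of size $\epsilon$, not $\epsilon^2$, and by Lemma \ref{lemma1} it does not vanish, since the extremal directions converge to $\pm\nu$ with $\nu=\nabla_{\mathbb{H}^n}\phi(0)/|\nabla_{\mathbb{H}^n}\phi(0)|$. The $t$-contribution $2t\,\partial_t\phi(0)$ is harmless because $|t_\epsilon|=O(\epsilon^3)$ by Lemma \ref{lemma1}, but the genuine first-order term must be eliminated, and naïvely adding the two individual expansions of $\max$ and $\min$ only shows that the surviving gradient term is $o(\epsilon)$, which is too weak.

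The device I would use to remove it is the reflection symmetry of the gauge ball: the gauge $\|(x,y,t)\|=\sqrt[4]{(|x|^2+|y|^2)^2+t^2}$ is invariant under $(x,y,t)\mapsto(-x,-y,-t)$, so $-P_{\epsilon,m}$ and $-P_{\epsilon,M}$ both lie in $\overline{B(0,\epsilon)}$. Maximality and minimality then give $\phi(P_{\epsilon,M})\ge\phi(-P_{\epsilon,m})$ and $\phi(P_{\epsilon,m})\le\phi(-P_{\epsilon,M})$. Adding $\phi(P_{\epsilon,m})$ to the first inequality and $\phi(P_{\epsilon,M})$ to the second, the relevant quantities become the symmetric pairs $\phi(P_{\epsilon,m})+\phi(-P_{\epsilon,m})$ and $\phi(P_{\epsilon,M})+\phi(-P_{\epsilon,M})$; in each such pair the first-order and $t$-terms of \eqref{taylor} are odd and cancel exactly, leaving
$$\phi(Q)+\phi(-Q)=2\phi(0)+\langle D_{\mathbb{H}^n}^{2,*}\phi(0)(x,y),(x,y)\rangle+o(\epsilon^2)\qquad\text{for }Q=(x,y,t)\in\overline{B(0,\epsilon)}.$$
Thus $\max+\min$ is squeezed between one quantity of this form built from $P_{\epsilon,m}$ and one built from $P_{\epsilon,M}$; this reflection step, where the symmetry of the gauge must be coupled with extremality precisely so that the non-negligible linear term disappears, is the delicate heart of the argument.

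To finish I would pass to the limit directions. Because the quadratic form $v\mapsto\langle D_{\mathbb{H}^n}^{2,*}\phi(0)v,v\rangle$ is even and, by Lemma \ref{lemma1}, $(x_M,y_M)/\epsilon\to\nu$ while $(x_m,y_m)/\epsilon\to-\nu$, both the upper and lower squeezing quantities equal $2\phi(0)+\epsilon^2\langle D_{\mathbb{H}^n}^{2,*}\phi(0)(h_\epsilon,l_\epsilon),(h_\epsilon,l_\epsilon)\rangle+o(\epsilon^2)$, the replacement of the limiting direction by $(h_\epsilon,l_\epsilon)$ costing only $o(1)$, hence $o(\epsilon^2)$ after multiplication by $\epsilon^2$. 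This establishes the displayed identity for $\max+\min$. Combining it with Lemma \ref{meanformulaH}, with $\alpha+\beta=1$ and with the algebraic relation $\tfrac{\alpha}{2}=(p-2)C(n)\beta$, the whole expression collapses to the common value $\beta C(n)\epsilon^2\big[\Delta_{\mathbb{H}^n}\phi(0)+(p-2)\langle D_{\mathbb{H}^n}^{2,*}\phi(0)(h_\epsilon,l_\epsilon),(h_\epsilon,l_\epsilon)\rangle\big]+o(\epsilon^2)$. Since this holds as an equality modulo $o(\epsilon^2)$, both the $\ge$ and the $\le$ assertions of the lemma follow simultaneously.
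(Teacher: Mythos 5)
Your argument is correct where it applies and rests on the same central device as the paper's proof: reflect the extremal points through the origin, note that $-P_{\epsilon,M}$ and $-P_{\epsilon,m}$ remain in $\overline{B(0,\epsilon)}$ by the symmetry of the gauge, and couple this reflection with extremality so that the odd (first-order and $t$) terms of the Taylor expansion \eqref{taylor} cancel in the symmetric pairs $\phi(Q)+\phi(-Q)$. The divergence is in the endgame. The paper stops at the two one-sided bounds
\[
\phi(P_{\epsilon,m})+\phi(-P_{\epsilon,m})\;\le\;\max_{\overline{B(0,\epsilon)}}\phi+\min_{\overline{B(0,\epsilon)}}\phi\;\le\;\phi(P_{\epsilon,M})+\phi(-P_{\epsilon,M}),
\]
keeps the max-direction vector in one resulting inequality and the min-direction vector in the other (so that, strictly speaking, the vector $(h_\epsilon,l_\epsilon)$ is not the same in its two displayed estimates), and must then split into the cases $\alpha\ge 0$ ($p\ge 2$) and $\alpha<0$ ($p<2$), since multiplying a one-sided bound by $\alpha$ flips the inequality. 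You instead invoke Lemma \ref{lemma1} to identify both limiting directions with $\pm\nabla_{\mathbb{H}^n}\phi(0)/|\nabla_{\mathbb{H}^n}\phi(0)|$, use the evenness of the quadratic form to collapse the squeeze into a single asymptotic \emph{equality} for $\max+\min$, and so obtain both inequalities at once with no case analysis on the sign of $\alpha$ --- a cleaner finish. The price is a hidden hypothesis: Lemma \ref{lemma1} (and the fact that the extrema lie on $\partial B(0,\epsilon)$) requires $\nabla_{\mathbb{H}^n}\phi(P)\ne 0$, which is not among the hypotheses of Lemma \ref{lemmakey}; indeed your claimed equality fails at a horizontal critical point (at a nondegenerate saddle, $\max+\min$ records the sum of the extreme values of the form, not twice its value in the maximizing direction). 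Since the lemma is only ever applied to test functions with nonvanishing horizontal gradient, this does not affect the proof of Theorem \ref{maintheorem}, but you should state the assumption explicitly.
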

\begin{proof}
We can assume without any restriction that $P=0$ ($x=0$, $y=0$, and $t=0$) just moving $P$ to the origin by a left translation of the group.The  Taylor formula in the Heisenberg group gives
\begin{equation*}
\begin{split}
\phi(P_{\epsilon, M})=\phi(0)+&\langle\nabla_{\mathbb{H}^n}\phi(0),(x_{\epsilon, M},y_{\epsilon, M})\rangle+\frac{1}{2}
\langle D_{\mathbb{H}^n}^{2,*}\phi(0)(x_{\epsilon, M},y_{\epsilon, M}),
(x_{\epsilon, M},y_{\epsilon, M})\rangle \\
+& 2 t_{\epsilon, M}\,\partial_t\phi(0)+o(\epsilon^2).
\end{split}
\end{equation*}
and
\begin{equation*}
\begin{split}
\phi(-P_{\epsilon, M})=\phi(0)-&\langle\nabla_{\mathbb{H}^n}\phi(0),(x_{\epsilon, M},y_{\epsilon, M})\rangle+\frac{1}{2}
\langle D_{\mathbb{H}^n}^{2,*}\phi(0)(x_{\epsilon, M},y_{\epsilon, M}),
(x_{\epsilon, M},y_{\epsilon, M})\rangle \\
-& 2 t_{\epsilon, M}\,\partial_t\phi(0)+o(\epsilon^2).
\end{split}
\end{equation*}
Adding the last two inequalities we get
\begin{equation*}
\begin{split}
\phi(P_{\epsilon,M})+\phi(-P_{\epsilon,M})=&2\phi(0)+\langle D_{\mathbb{H}^n}^{2*}\phi(0)(x_{\epsilon, M},y_{\epsilon, M}),(x_{\epsilon, M},y_{\epsilon, M})\rangle+o(\epsilon^2).\\
 \end{split}
\end{equation*}
Using the definition of $P_{\epsilon,M}$ if follows that

\begin{equation}
\begin{split}
\max_{\overline{B(0,\epsilon)}}\phi+\min_{\overline{B(0,\epsilon)}}\phi &\le 
\max_{\overline{B(0,\epsilon)}}\phi+\phi(-P_{\epsilon, M})\\
&=\phi(0)+\langle D_{\mathbb{H}^n}^{2*}\phi(0)(x_{\epsilon, M},y_{\epsilon, M}),(x_{\epsilon, M},y_{\epsilon, M})\rangle+o(\epsilon^2), 
\end{split}
\end{equation}
which implies the inequality
\begin{equation}
\begin{split}
\phi(0)+\frac{1}{2}\langle D_{\mathbb{H}^n}^{2*}\phi(0)(x_{\epsilon, M},y_{\epsilon, M}),(x_{\epsilon, M},y_{\epsilon, M})\rangle\geq \frac{1}{2}\left(\max_{\overline{B(0,\epsilon)}}\phi+\min_{\overline{B(0,\epsilon)}}\phi\right)+o(\epsilon^2).\
\end{split}
\end{equation}
Multiplying this relation by $\alpha$, the expansion in Lemma \ref{meanformulaH} by
$\beta$, adding and using the fact that $\alpha+\beta=1$ we obtain
\begin{equation*}
\begin{split}
 \phi(0)+&C(n)\,\beta\,\Delta_{\mathbb{H}^n}\phi(0)\epsilon^2+\frac{\alpha}{2}\langle D_{\mathbb{H}^n}^{2*}\phi(0)(x_{\epsilon, M},y_{\epsilon, M}),(x_{\epsilon, M},y_{\epsilon, M})\rangle\\
&\geq \beta\kint_{B(0,\epsilon)}\phi(x,y,t)+\frac{\alpha}{2}\left(\min_{\overline{B(0,\epsilon)}}\phi +\max_{\overline{B(0,\epsilon)}}\phi\right)+o(\epsilon^2).
\end{split}
\end{equation*}
as we wanted to show in the case $\alpha>0$. 

We determine  $\alpha$ and $\beta$ in such a way that
$$
\frac{\alpha}{2C(n)\beta}=p-2,
$$ 
Thus together the requirement  $\alpha+\beta=1$ we get
$$
\frac{1-\beta}{2C(n)\beta}=p-2,  $$
giving 
$$
\alpha=\frac{2(p-2)C(n)}{2(p-2)C(n)+1}\text{ and } 
 \beta=\frac{1}{2(p-2)C(n)+1}.
$$ 
We can now write
\begin{equation*}
\begin{split}
&  \frac{\epsilon^2C(n)}{2(p-2)C(n)+1}\left(\Delta_{\mathbb{H}^n}\phi(0)+(p-2)\langle D_{\mathbb{H}^n}^{2*}\phi(0)(\frac{x_{\epsilon, M}}{\epsilon},\frac{y_{\epsilon, M}}{\epsilon}),(\frac{x_{\epsilon, M}}{\epsilon},\frac{y_{\epsilon, M}}{\epsilon})\rangle\right) +o(\epsilon^2)\\
&\geq \left(\frac{1}{2(p-2)C(n)+1}\kint_{B(0,\epsilon)}\phi+\frac{(p-2)C(n)}{2(p-2)C(n)+1}\left[\min_{\overline{B(0,\epsilon)}}\phi +\max_{\overline{B(0,\epsilon)}}\phi\right] -\phi(0)\right).
\end{split}
\end{equation*}
This computation works for $\alpha\geq 0$;  that is for every $p\geq 2$ 
When $\alpha<0$ the procedure is the same but the sign of the inequality is reversed, that is
\begin{equation*}
\begin{split}
&  \frac{\epsilon^2C(n)}{2(p-2)C(n)+1}\left(\Delta_{\mathbb{H}^n}\phi(0)+(p-2)\langle D_{\mathbb{H}^n}^{2*}\phi(0)(\frac{x_{\epsilon, M}}{\epsilon},\frac{y_{\epsilon, M}}{\epsilon}),(\frac{x_{\epsilon, M}}{\epsilon},\frac{y_{\epsilon, M}}{\epsilon})\rangle\right) +o(\epsilon^2)\\
&\leq \left(\frac{1}{2(p-2)C(n)+1}\kint_{B(0,\epsilon)}\phi+\frac{(p-2)C(n)}{2(p-2)C(n)+1}\left[\min_{\overline{B(0,\epsilon)}}\phi +\max_{\overline{B(0,\epsilon)}}\phi\right] -\phi(0)\right).
\end{split}
\end{equation*}
and $p\in (1,2)$.
Arguing with the inequality coming from the minimum 
 we get

\begin{equation*}
\begin{split}
& \phi(0)+\frac{1}{2}\langle D_{\mathbb{H}^n}^{2*}\phi(0)(x_{\epsilon, m},y_{\epsilon, m}),(x_{\epsilon, m},y_{\epsilon, m})\rangle+o(\epsilon^2)\leq \frac{1}{2}\left(\min_{\overline{B(0,\epsilon)}}\phi +\max_{\overline{B(0,\epsilon)}}\phi\right).
\end{split}
\end{equation*}
and 
\begin{equation*}
\begin{split}
&  \frac{\epsilon^2C(n)}{2(p-2)C(n)+1}\left(\Delta_{\mathbb{H}^n}\phi(0)+(p-2)\langle D_{\mathbb{H}^n}^{2*}\phi(0)(x_{\epsilon, m},y_{\epsilon, m}),(x_{\epsilon, m},y_{\epsilon, m})\rangle\right) +o(\epsilon^2)\\
&\leq \left(\frac{1}{2(p-2)C(n)+1}\kint_{B(0,\epsilon)}\phi+\frac{(p-2)C(n)}{2(p-2)C(n)+1}\left[\min_{\overline{B(0,\epsilon)}}\phi +\max_{\overline{B(0,\epsilon)}}\phi\right] -\phi(0)\right),
\end{split}
\end{equation*}
for $p\geq 2$ 
and
\begin{equation}\label{recalling1}
\begin{split}
&  \frac{\epsilon^2C(n)}{2(p-2)C(n)+1}\left(\Delta_{\mathbb{H}^n}\phi(0)+(p-2)\langle D_{\mathbb{H}^n}^{2*}\phi(0)\frac{(x_{\epsilon, m},y_{\epsilon, m})}{\epsilon},\frac{(x_{\epsilon, m},y_{\epsilon, m})}{\epsilon}\rangle\right) +o(\epsilon^2)\\
&\geq \left(\frac{1}{2(p-2)C(n)+1}\kint_{B(0,\epsilon)}\phi+\frac{(p-2)C(n)}{2(p-2)C(n)+1}\left[\min_{\overline{B(0,\epsilon)}}\phi +\max_{\overline{B(0,\epsilon)}}\phi\right] -\phi(0)\right),
\end{split}
\end{equation}
for $p\in (1,2)$. 
 
\end{proof}

\begin{proof}[Proof of Theorem \ref{maintheorem}. ]

Suppose that $u$ satisfies the asymptotic expansion in the viscosity sense as in Definition \ref{inequal_viscosity_definition}. Let $\phi$ be a smooth function such that $u-\phi$ has a strict maximum at $P$ and $\nabla_{\mathbb{H}^n}\phi(P)\not =0$. Then it follows, by condition (ii) in Definition \ref{inequal_viscosity_definition},

\begin{equation*} 
\begin{split}
\frac{1}{2(p-2)C(n)+1}\kint_{B(P,\epsilon)}\phi+\frac{(p-2)C(n)}{2(p-2)C(n)+1}\left[\min_{\overline{B(P,\epsilon)}}\phi +\max_{\overline{B(P,\epsilon)}}\phi\right] -\phi(P)\geq 0,
\end{split}
\end{equation*}
and recalling \ref{recalling1}
we conclude that 
$$
\frac{\epsilon^2C(n)}{2(p-2)C(n)+1}\left(\Delta_{\mathbb{H}^n}\phi(P)+(p-2)\langle D_{\mathbb{H}^n}^{2*}\phi(P)\frac{(x_{\epsilon, m},y_{\epsilon, m})}{\epsilon},\frac{(x_{\epsilon, m},y_{\epsilon, m})}{\epsilon}\rangle\right) \ge o(\epsilon^2).
$$
Dividing by $\epsilon^{2}$, using  Lemma \ref{lemma1} and letting $\epsilon\to 0$ we get 

\begin{equation*}
\begin{split}
  \Delta_{\mathbb{H}^n}\phi(P)+(p-2)\langle D_{\mathbb{H}^n}^{2*}\phi(P)\frac{\nabla_{\mathbb{H}^n}\phi(P)}{\mid \nabla_{\mathbb{H}^n}\phi(P)\mid},\frac{\nabla_{\mathbb{H}^n}\phi(P)}{\mid \nabla_{\mathbb{H}^n}\phi(P)\mid}\rangle\geq 0,
\end{split}
\end{equation*}
that is $u$ is a viscosity subsolution of $\Delta_{\mathbb{H}^n,p}u=0.$

Let us prove the converse implication. Assume that $u$ is a viscosity solution. In particular $u$ is a supersolution so that  for every $C^2$ test function $\phi$ such that $u-\phi$ is a strict minimum at the point $P\in \Omega$ with $\nabla_{\mathbb{H}^n}\phi(P)\not=0$ we  have
$$
-(p-2)\Delta_{\mathbb{H}^n\infty}\phi(P)-\Delta_{\mathbb{H}^n}\phi(P)\geq 0.
$$
Recalling  inequality (\ref{recalling1})
\begin{equation*} 
\begin{split}
& 0\geq \frac{\epsilon^2C(n)}{2(p-2)C(n)+1}\left(\Delta_{\mathbb{H}^n}\phi(P)+(p-2)\langle D_{\mathbb{H}^n}^{2*}\phi(P)\frac{(x_{\epsilon, m},y_{\epsilon, m})}{\epsilon},\frac{(x_{\epsilon, m},y_{\epsilon, m})}{\epsilon}\rangle\right) \\
&\geq \left(\frac{1}{2(p-2)C(n)+1}\kint_{B(0,\epsilon)}\phi+\frac{(p-2)C(n)}{2(p-2)C(n)+1}\left[\min_{\overline{B(P,\epsilon)}}\phi +\max_{\overline{B(P,\epsilon)}}\phi\right] -\phi(P)\right)+o(\epsilon^2),
\end{split}
\end{equation*}
and keeping in mind that $$
\lim_{\epsilon\to 0}(\frac{x_{\epsilon, m}}{\epsilon},\frac{y_{\epsilon, m}}{\epsilon})=-\frac{\nabla_{\mathbb{H}^n}\phi(P)}{\mid \nabla_{\mathbb{H}^n}\phi(P)\mid},
$$
we get
$$
\frac{1}{2(p-2)C(n)+1}\kint_{B(P,\epsilon)}\phi\,+\frac{(p-2)C(n)}{2(p-2)C(n)+1}\left[\min_{\overline{B(P,\epsilon)}}\phi +\max_{\overline{B(P,\epsilon)}}\phi\right] -\phi(P)+o(\epsilon^2)\leq 0, 
$$
which is condition (i) in the Definition \ref{inequal_viscosity_definition}. An analogous computation gives the proof of condition (ii).

\end{proof}


\begin{thebibliography}{ABCD} 

\bibitem[AF1]{AF1}  {\sc N. Arcozzi, F. Ferrari}, \emph{Metric
normal and distance function in the  Heisenberg group},
Math. Z., 256(3), 661--684 (2007).

\bibitem[AF2]{AF2}  {\sc N. Arcozzi, F. Ferrari}, \emph{The Hessian of the
distance from a surface in the Heisenberg group}, Annales
Academia Scientiarum Fennica Mathematica, 33, 35--63 (2008).

\bibitem[AFM]{AFM}  {\sc N. Arcozzi, F. Ferrari, F. Montefalcone}, \emph{CC-distance and
metric normal of smooth hypersurfaces in sub-Riemannian two-step Carnot
groups},
arXiv:0910.5648v1, preprint (2010)

\bibitem[Bi]{Bi} {\sc Thomas Bieske}, \emph{Equivalence of Weak and Viscosity Solutions to the
$p$-Laplace Equation in the Heisenberg Group}, 
Ann. Acad. Sci. Fen 31, 363--379 (2006).

\bibitem[BLU]{BLU}{\sc A. Bonfiglioli, E. Lanconelli, F. Uguzzoni},
{\em Stratified Lie groups and potential theory
for their sub-Laplacians}.
Springer Monographs in Mathematics, Springer, Berlin (2007). 

\bibitem[B1]{Bony1}  {\sc J.-M. Bony},
\emph{Principe du maximum et in\'egalit\'e de {H}arnack pour les
op\'erateurs elliptiques d\'eg\'en\'er\'es},
{S\'eminaire de Th\'eorie du Potentiel, dirig\'e par M.
Brelot, G. Choquet et J. Deny},
{Secr\'etariat math\'ematique},
{Paris}
{(1969)}.

\bibitem[B2]{Bony2}  {\sc J.-M. Bony},
\emph{
Principe du maximum, in\'egalite de {H}arnack et unicit\'e
du probl\`eme de {C}auchy pour les op\'erateurs elliptiques
d\'eg\'en\'er\'es},
{Ann. Inst. Fourier (Grenoble)},
19(1), 277--304 (1969).



\bibitem[CC]{CC}
 {\sc L. Capogna, G. Citti}, {\em Generalized mean curvature flow in Carnot groups,} Comm. Partial Differential Equations 34, 937--956 (2009).

\bibitem[CDPT]{CDPT}{\sc L. Capogna; D. Danielli; S.D. Pauls;
J.T. Tyson}, {\em
An introduction to the Heisenberg group and the sub-Riemannian
isoperimetric problem}. PM 259 Birkh\"auser (2007).

\bibitem[FSS]{FSS1}{\sc B. Franchi, R. Serapioni, F. Serra Cassano,} \emph{Rectifiability and perimeter in the Heisenberg group}. Math. Ann. 321, 479--531 (2001).

\bibitem[FLM]{FLM}{\sc F. Ferrari, Q. Liu, J.J. Manfredi}, \emph{On the horizontal mean curvature Flow for axisymmetric surfaces in the Heisenberg group}, preprint (2012) 

\bibitem[GT]{GT}  {\sc D. Gilbarg, N.S. Trudinger}, \emph{Elliptic partial differential equations of second order},
Reprint of the 1998 edition. Classics in Mathematics. Springer-Verlag, Berlin, 2001. xiv+517.

\bibitem[GL]{GutierrezLanconelli}{\sc C. Guti\'errez, E. Lanconelli},
{\em Classical viscosity and average solutions for PDE's with nonnegative characteristic form}, Atti Accad. Naz. Lincei Cl. Sci. Fis. Mat. Natur. Rend. Lincei (9) Mat. Appl. 15, 17--28 (2004).





\bibitem[JLM]{JLM}{\sc P. Juutinen, P. Lindqvist, J.J. Manfredi}, \emph{On the equivalence of viscosity solutions and weak solutions for a quasi-linear elliptic equation}, SIAM J. Math. Anal., 33, 699--717 (2001).

\bibitem[MPR]{MPR}{\sc J.J. Manfredi, M. Parviainen, J.D.Rossi}, {\em An asymptotic mean value characterization for p-harmonic functions}. Proc. Amer. Math. Soc. 138, 881--889 (2010).
\bibitem[PSSW]{PSSW} Y. Peres, O. Schramm, S. Sheffield and D. Wilson,
{\it Tug-of-war and the infinity Laplacian}, J. Amer. Math. Soc.
22, 167--210 (2009).

\bibitem[PT]{Pucci_Talenti}{\sc C. Pucci, G. Talenti},
{\em Elliptic (second-order) partial differential equations with measurable coefficients and approximating integral equations. }
Advances in Math. 19, 48--105 (1976). 


\end{thebibliography}
\end{document}